\newtheorem{thm}{Theorem}
\definecolor{Dblue}{rgb}{0,0,1}
\definecolor{Dbrown}{rgb}{0.59,0.4,0}
\definecolor{Dred}{rgb}{0.64,0,0}
\definecolor{Dgreen}{rgb}{0,0.4,0}
\def \bs {\boldsymbol}
\def \mS {\mathcal{S}}
\def \mC {\mathcal{C}}
\def \mH {\mathcal{H}}
\def \rs {\mathrm{s}}
\def \rr {\mathrm{r}}
\def \rd {\mathrm{d}}
\def \rb {\mathrm{b}}
\def \rh {\mathrm{h}}
\def \rp {\mathrm{p}}
\def \rw {\mathrm{w}}
\def \rcap {\mathrm{cap}}
\def \r {\bs r}
\def \rinv {\mathrm{inv}}
\def \rom {\mathrm{o/m}}
\def \Ss {S^\rs}
\def \mR {\mathcal{R}}
\def \Pr {\mathrm{Pr}}
\def \mJ {\mathcal{J}}
\def \mT {\mathcal{T}}
\def \x  {\bs x}
\def \z  {\bs z}
\def \v  {\bs v}
\def \mC {\mathcal{C}}
\def \mB {\mathcal{B}}
\def \tbz {\tilde{\bs z}}
\begin{document}

\title{Joint Optimization of Hybrid Energy Storage and Generation Capacity with Renewable Energy}
\author{Peng~Yang,~\IEEEmembership{Student Member,~IEEE},
        and~Arye~Nehorai,~\IEEEmembership{Fellow,~IEEE}
\thanks{The authors are with the Preston M. Green Department
of Electrical and Systems Engineering, Washington University in St. Louis, St. Louis,
MO, 63130 USA. E-mail: yangp@ese.wustl.edu, nehorai@ese.wustl.edu.}
\thanks{This work was supported by the International Center for Advanced Renewable Energy and Sustainability (I-CARES) at Washington University in St. Louis.}
}

\maketitle

\begin{abstract}
In an isolated power grid or a micro-grid with a small carbon footprint, the penetration of renewable energy is usually high. In such power grids, energy storage is important to guarantee an uninterrupted and stable power supply for end users. Different types of energy storage have different characteristics, including their round-trip efficiency, power and energy rating, energy loss over time, and investment and maintenance costs. In addition, the load characteristics and availability of different types of renewable energy sources vary in different geographic regions and at different times of year. Therefore joint capacity optimization for multiple types of energy storage and generation is important when designing this type of power systems. In this paper, we formulate a cost minimization problem for storage and generation planning, considering both the initial investment cost and operational/maintenance cost, and propose a distributed optimization framework to overcome the difficulty brought about by the large size of the optimization problem. The results will help in making decisions on energy storage and generation capacity planning in future decentralized power grids with high renewable penetrations.
\end{abstract}

\begin{keywords}
energy storage, capacity planning, renewable energy sources, micro-grid, distributed optimization
\end{keywords}

\printnomenclature[0.55in]

\section{Introduction}
Renewable energy sources \cite{re:renewable}, including solar and wind energy, provide only about 3\% of the electricity in the United States. However, high penetration of renewable energy is becoming the trend for various reasons. The projected future shortage in fossil fuels, environmental concerns, and advances in smart grid \cite{sg:collier10} technologies stimulate the increasing penetration of renewable energy. Researchers have shown that supplying all the energy needs of the United States from renewable energy is realizable in the future \cite{sg:Turner99}. According to the National Renewable Energy Laboratory (NREL), renewable energy potentially will support about 80\% of the total electricity consumption in the U.S. in 2050 \cite{re:nrel12}. The high penetration of renewable energy is especially common in (remote) isolated grids, or micro-grids with small carbon footprints \cite{sg:wiedmann08}, which are self-sustained most of the time.

Most renewable energy sources, including wind and solar, are highly intermittent. The availability of such energy sources varies significantly in different geographical locations. In the same location, the amount of generation also fluctuates depending on the time of day, season, and weather conditions. A grid with high renewable energy penetration needs to build sufficient energy storage to ensure an uninterrupted supply to end users \cite{sg:carrasco06},\cite{sg:Ibrahim08}.
There are different types of energy storages, including super-capacitors, flywheels, chemical batteries, water pumps, hydrogen, and compressed air \cite{sg:gonzalex08}-\nocite{sg:trong10}\nocite{sg:zhou11ces}\nocite{sg:Chalk06}\nocite{sg:barton04}\cite{sg:vosburgh78}. Different types of energy storage have different characteristics, e.g., round-trip energy efficiency, maximum capacity/power rating, energy loss over time, and investment/operational costs. For example, flywheel energy storage has high energy efficiency and charge/discharge rates, but the rate of energy loss over time is relatively high. Chemical batteries have relatively high energy efficiency and low energy loss over time, however their maintenance cost is high due to their low durability, which is quantified by cycling capacity\footnote{The maximum number of charging cycles (full charge and discharge).}. Water pumps and hydrogen energy storages have low energy efficiency, but their energy loss over time is small. Therefore they are often used for longer-term energy storage.

Although there has been research on planning and/or operating a specific type of energy storage system for isolated electricity grids \cite{sg:abbey07}-\nocite{sg:brown08}\nocite{sg:abbey09}\cite{sg:zhang13}, few works consider exploiting the different characteristics of multiple types of energy storage and the different availabilities of multiple types of renewable energy sources, forming a hybrid energy generation and storage system. Nevertheless, jointly planning for energy storage along with renewable generation capacity potentially results in a more economical and efficient energy system.

Since the future grid is becoming decentralized, we consider the scenario of an isolated grid, or a micro-grid with a small carbon footprint, whose energy is generated mainly from renewable energy sources. To make the scenario more practical, we assume the grid also has traditional diesel generators. The diesel generator on its own is insufficient to supply the demand of the grid, as its generation capacity is significantly less than the peak load. We formulate an optimization problem with the objective of minimizing the investment cost and operational/maintenance cost of energy storage and generators, by finding an optimal combination of different energy storages and generators (which we refer to as design parameters) and optimizing their operations.

The renewable generation and user demands change with time, and have different characteristics at different times of day and different days of the year. It is often difficult to obtain an accurate probability density function to reflect these complex characteristics. Therefore, several years of historical data may be needed to obtain better optimization results. As the size of historical database increases, the design horizon of the optimization problem increases, and the problem becomes increasingly difficult to solve. To resolve this problem, we reformulate the original problem as a consensus problem. The entire design horizon is divided into multiple shorter horizons, and thus the design parameters become the consensus parameters, which should be consistent across all sub-problems. This framework can also be extended to the case of solving chance-constrained optimization using scenario approximations, as we will elaborate later. We propose to solve the consensus problem in a parallel distributed manner based on the alternating direction method of multipliers (ADMM) \cite{opt:admm10}, which mitigates the curse of dimensionality due to increased number of scenarios.

The rest of this paper is organized as follows. In Sec.\ \ref{sec:related} we briefly review some relevant works. In Sec.\ \ref{sec:model} we describe the system model, including the energy storage and generators. In Sec.\ \ref{sec:opt} we formulate the optimization problem and solve it in a distributed manner. We provide numerical examples in Sec.\ \ref{sec:exp}, and conclude the paper in Sec.\ \ref{sec:conclusion}.

{\em Notations:} We use italic symbols to denote scalars, bold italic symbols to denote vectors, calligraphic symbols to denote sets, $\mathrm{card}(\cdot)$ to denote the cardiality of a set, and superscript $\top$ to denote matrix or vector transpose. We use $\{\bs a^j\}$ to denote a collection of all $\bs a^j$'s for $j\in\mJ$, and $(\bs a)_j$ to denote the $j$th element of vector $\bs a$. The concatenation of two vectors $[\bs a; \bs b]$ is equivalent to $[\bs a^\top, \bs b^\top]^\top$.


\section{Related Work} \label{sec:related}

There have been several works on optimization with energy storages and renewable generation, and we briefly review some of them here. In \cite{sg:gonzalex08}, the authors investigated the combined optimization of a wind farm and a pumped storage facility from the perspective of a generation company, using a two-step stochastic optimization approach. The optimization produces optimal bids for the day-ahead spot market, and optimal operation strategies of the facilities. The optimal planning of generation and energy storage capacity was not considered. Zhou {\em et al.} \cite{sg:zhou11ces} proposed a composite energy storage system that contains both high energy density storage and high power density storage. The proposed power converter configuration enables actively distributing demands among different energy storages. Brown {\em et al.} provided an economical analysis of the benefits of having pumped storage in a small island system with abundant renewable energy, and proposed to find the optimal pumped storage capacity through linear programming. In \cite{sg:abbey09}, the authors considered optimizing the rating of energy storage in a wind-diesel isolated grid, and demonstrated that high wind penetration potentially results in significant cost savings in terms of fuel and operating costs.

The main contributions of our work are two fold. First, instead of a single type of energy storage or renewable energy source, we consider a hybrid system with multiple types of energy storage and renewable energy sources, and jointly optimize their capacities and operation. This joint optimization exploits the benefits from each individual element, and therefore is more cost efficient. Second, we propose a distributed optimization framework, so that the capacity design problem becomes scalable when the number of scenarios increases.

\section{System Model} \label{sec:model}

\subsection{Energy storage model}
Assume there is a set $\mS$ of different types of energy storages. We use superscript $\rs \in \mS$ to denote the type of the storage. Each type of energy storage is characterized by a group of parameters. We use $\eta^\rs$ to denote the one-way energy efficiency of energy storage type $\rs$; $\delta^\rs$ to denote the ratio between the rated power and rated energy; $\xi^\rs$ to denote the energy loss ratio per unit time period. The cost of energy storage includes the initial investment cost $c^\rs_\rinv$ and operational/maintenance cost $c^\rs_\rom$. We use $a^\rs$ to denote the amortization factor.

\nomenclature[as]{$\mS$}{Set of different energy storage types}%
\nomenclature[as]{$\rs$}{Types of energy storage}%
\nomenclature[as1]{$\eta^\rs$}{One-way energy efficiency}%
\nomenclature[as1]{$\delta^\rs$}{Rated power/energy ratio}%
\nomenclature[as1]{$\xi^\rs$}{Energy loss ratio per unit time}%
\nomenclature[as1]{$\Ss_{\max}$}{Energy storage capacity}%
\nomenclature[as1]{$P^{\rs,-}_t$}{Charged energy during time period $t$}%
\nomenclature[as1]{$P^{\rs,-}_t$}{Discharged energy time period $t$}%
\nomenclature[b]{$c_\rinv$}{Investment cost per unit storage or generator}%
\nomenclature[b]{$c_\rom$}{Operational/maintenance cost of energy storage or generator}%
\nomenclature[b]{$a$}{Amortization factor per time period}%

Let $\Ss_t$ denote the energy in storage $\rs$ at the beginning of time period $t$, satisfying the following equation:
\begin{equation} \label{eqn:batconsto}
\Ss_{t+1} =
\begin{dcases*}
\Ss_t - \frac{1}{\eta^\rs}P^\rs_t - \xi^\rs \Ss_t   & if $P^\rs_t \geq 0$, \\
\Ss_t - \eta^\rs P^\rs_t - \xi^\rs \Ss_t            & if $P^\rs_t < 0$,
\end{dcases*}
\end{equation}
where positive $P^\rs_t$ denotes discharge from storage $\rs$ during time period $t$, and negative $P^\rs_t$ denotes charge to the storage. Make the following substitution:
\begin{equation} \label{eqn:psub}
P^\rs_t = P^{\rs,+}_t - P^{\rs,-}_t, \quad P^{\rs,+}_t \geq 0,\quad P^{\rs,-}_t\geq 0,
\end{equation}
and we can then rewrite (\ref{eqn:batconsto}) as
\begin{equation} \label{eqn:batconst1}
\Ss_{t+1} = \Ss_t - \frac{1}{\eta^\rs}P^{\rs,+}_t + \eta^\rs P^{\rs,-}_t - \xi^\rs \Ss_t.
\end{equation}

An interpretation of (\ref{eqn:batconst1}) is that the energy stored in a specific energy storage type equals the stored energy at the beginning of the previous time point, minus (plus) the discharge (charge) during the previous time period, minus the energy loss due to the nature of the storage.

The amount of stored energy and the charge/discharge power is constrained by the capacity of the storage, i.e.,
\begin{equation} \label{eqn:batconst2}
0 \leq \Ss_t \leq \Ss_{\max},
\end{equation}
\begin{equation}\label{eqn:batconst3}
0 \leq P^{\rs,+}_t \leq P^{\rs,+}_{\max}, \quad
0 \leq P^{\rs,-}_t \leq P^{\rs,-}_{\max}.
\end{equation}
In this work we use $\delta^\rs$ to denote the ratio between the rated power and the rated storage capacity. Therefore $P^{\rs,+}_{\max} = \eta^\rs \delta^\rs \Ss_{\max}$ and $P^{\rs,-}_{\max} = \delta^\rs \Ss_{\max}$. If the ratio is not fixed, we can introduce another design variable for the rated power, and modify the investment cost so that it depends on both $\Ss_{\max}$ and $\{P^{\rs,+}_{\max}, P^{\rs,-}_{\max}\}$.

The cost of each type of energy storage during time period $t$, denoted by $C^\rs_t$, includes the amortized investment cost and the operational/maintenance cost, i.e.,
\begin{equation}
C^\rs_t = a^\rs c^\rs_\rinv \Ss_{\max}+ c^\rs_\rom(P^{\rs,+}_t, P^{\rs,-}_t).
\end{equation}
In this equation, the operational/maintenance cost depends on the amount of charge and discharge, and any fixed cost can be included as a constant term in this cost function.

\nomenclature[as1]{$C^\rs_t$}{Energy storage cost during time period $t$}%

Note that we made substitution (\ref{eqn:psub}), and therefore constraints (\ref{eqn:batconsto}) and (\ref{eqn:batconst1}) are equivalent if only one element of each pair $\{P^{\rs,+}_t, P^{\rs,-}_t\}$ is non zero for all $\rs, t$. Theorem \ref{thm:equivalent} (in Sec. \ref{subsec:formulation}) guarantees that this condition is satisfied, and therefore the two constraints are indeed equivalent.

\subsection{Generator model}
The generators are classified into traditional diesel generators and renewable generators. For diesel generators, the constraints include the generation capacity and generator ramp constraints. Let $\mH$ denote the set of all diesel generators, and $H^\rh_t$ denote the generation of generator type $\rh \in \mH$ during time period $t$. We then have
\begin{equation} \label{eqn:genconst1}
0 \leq H^\rh_t \leq H^\rh_{\max},
\end{equation}
\begin{equation}\label{eqn:genconst2}
H_\mathrm{ramp}^{\rh,-} \leq H^\rh_{t+1} - H^\rh_t \leq H_\mathrm{ramp}^{\rh,+},
\end{equation}
where $H^\rh_{\max}$ denotes the maximum generation capacity, and $H_\mathrm{ramp}^{\rh,-}$ and $H_\mathrm{ramp}^{\rh,+}$ denote ramp down and ramp up constraints, respectively.
The cost of diesel generators consists of the amortized investment cost and the operational/maintenance cost, denoted by
\begin{equation}
C^\rh_t = a^\rh c^\rh_\rinv H^\rh_{\max}+ c^\rh_\rom(H^\rh_t).
\end{equation}
Usually a second-order quadratic function or piece-wise linear function is used for $c^\rh_\rom(H^\rh_t)$. Any environmental tax can also be included in this cost function.

\nomenclature[at]{$\mH$}{Set of different of diesel generators}%
\nomenclature[at]{$\rh$}{Types of diesel generators}%
\nomenclature[at1]{$H^\rh_{\max}$}{Maximum generation capacity}%
\nomenclature[at11]{$H_\mathrm{ramp}^{\rh,-}$}{Ramp down constraint}%
\nomenclature[at11]{$ H_\mathrm{ramp}^{\rh,+}$}{Ramp up constraint}%
\nomenclature[at1]{$H^\rh_t$}{Diesel generation during time period $t$}%
\nomenclature[at1]{$C^\rs_t$}{Diesel energy cost during time period $t$}%

We employ multiple types of renewable generators, including wind and solar, which are considered as non-dispatchable generations. Let $R^\rr_t$ denote the renewable generation from type $\rr \in \mR$ generator during time period $t$, and $R^\rr_{\max}$ denote the installed capacity. Then the generation can be written as
\begin{equation} \label{eqn:genconst3}
R^\rr_t = r^\rr_t R^\rr_{\max},
\end{equation}
where $r^\rr_t$ is a random variable denoting the renewable generation per unit generation capacity. The cost for renewable energy during time period $t$ is then
\begin{equation}
C^\rr_t = a^\rr c^\rr_\rinv R^\rr_{\max}+ c^\rr_\rom(R^\rr_t).
\end{equation}

\nomenclature[ar]{$\mR$}{Set of different renewable generators}%
\nomenclature[ar]{$\rr$}{Types of renewable generators}%
\nomenclature[ar1]{$R^\rr_{\max}$}{Maximum generation capacity}%
\nomenclature[ar1]{$R^\rr_t$}{Renewable generation during time period $t$}%
\nomenclature[ar1]{$ r^\rr_t$}{Renewable generation per unit generation capacity during time period $t$}%
\nomenclature[ar1]{$C^\rr_t$}{Renewable energy cost during time period $t$}%

In addition to the generator types we discuss here, other types of generators, e.g., hydro and nuclear generators can also be modeled similarly and included in the planning problem.

\subsection{Load balance constraint}
The total generation should equal the total demand in a power grid at all times. Let $G_t$ denote the energy shortage for an isolated grid, or the energy drawn from the main grid for a micro-grid. The total generation and discharge from the energy storages should be equal to the total consumption and charge to the energy storages. We can then write the load balance constraint as follows:
\begin{equation} \label{eqn:lbconst}
D_t = \sum_{\rr \in \mR}R^\rr_t + \sum_{\rh \in \mH}H^\rh_t + \sum_{\rs \in \mS}\left(P^{\rs,+}_t - P^{\rs,-}_t\right) + G_t, \forall t,
\end{equation}
where $D_t$ denotes the demand from users. Note that $G_t$ can be negative, which denotes energy injection to the main grid from a micro-grid, or dumped energy in an isolated grid.

\nomenclature[au]{$D_t$}{Energy demand from users during time period $t$}%
\nomenclature[au]{$G_t$}{Energy shortage or energy drawn from main grid during time period $t$}%

\section{Storage and Renewable Generation Planning} \label{sec:opt}

\subsection{Optimal planning problem} \label{subsec:formulation}
The planning goal is to find the optimal portfolio of different types of energy storage and generators, so that the total cost (including investment and operational/maintenance) is minimized, while most of the needs of the grid can be satisfied. Let $\mT$ denote the planning horizon, and the objective function can then be written as
\begin{equation}
\begin{split}
\begin{aligned}
f(\bs S_{\max}, \bs R_{\max},& \bs H_{\max}) = \\
\sum_{t \in \mT} &\left(\sum_{\rs \in \mS} C^\rs_t + \sum_{\rr \in \mR} C^\rr_t + \sum_{\rh \in \mH} C^\rh_t\right).
\end{aligned}
\end{split}
\end{equation}
\nomenclature[au]{$\mT$}{Set of time periods in planning horizon}%
\nomenclature[c]{$f$}{Objective function for planning problem}%

Due to the intermittency of renewable energy sources, it is possible that in extreme cases, the total local generation will not meet the total demand. We write the grid reliance constraint (for micro-grids) or the energy shortage constraint (for isolated grids) as
\begin{equation} \label{eqn:grd}
G_t^j \leq G_\mathrm{th},
\end{equation}
where $G_\mathrm{th}$ is a threshold which can be a function of current time and demand. There are also constraints on the minimum and maximum capacity for each type of storage and generator, which are denoted as
\begin{equation} \label{eqn:capconst}
\begin{split}
&S_{\min}^{\rs, \rcap} \leq S^\rh_{\max} \leq S_{\max}^{\rs, \rcap}, \forall \rs, \\
&R_{\min}^{\rr, \rcap} \leq R^\rh_{\max} \leq R_{\max}^{\rr, \rcap}, \forall \rr, \\
&H_{\min}^{\rh, \rcap} \leq H^\rh_{\max} \leq H_{\min}^{\rh, \rcap}, \forall \rh.
\end{split}
\end{equation}
We then formulate the optimization problem for energy planning as
\begin{eqnarray}  \label{eqn:opt}
\begin{aligned}
& \mathop{\min}_{\bs S_{\max}, \bs R_{\max}, \bs H_{\max}} & & f(\bs S_{\max}, \bs R_{\max},
\bs H_{\max}) \\
& \mathrm{subject~to~} & & \text{Storage const. } (\ref{eqn:batconst1}) - (\ref{eqn:batconst3}),  \quad \forall \rs, t \\
& & & \text{Generator const. } (\ref{eqn:genconst1}) - (\ref{eqn:genconst3}),  \quad \forall \rr, \rh, t\\
& & & \text{Load balance const. } (\ref{eqn:lbconst}),  \quad \forall t\\
& & & \text{Energy shortage const. } (\ref{eqn:grd}),  \quad \forall t \\
& & & \text{Capacity const. } (\ref{eqn:capconst}).
\end{aligned}
\end{eqnarray}

One problem with this formulation is whether constraints (\ref{eqn:batconsto}) and (\ref{eqn:batconst1}) are equivalent. Based the problem setup, we have the following theorem.
\begin{thm} \label{thm:equivalent}
In a cost minimization context, given an increasing positive operational cost function for charging and discharging, by making the substitution (\ref{eqn:psub}), we have that $P^{\rs,+}_t P^{\rs,-}_t =0$ for all $t$, i.e., only one of $P^{\rs,+}_t$ and $P^{\rs,-}_t$ can be non-zero for any given time period $t$.
\end{thm}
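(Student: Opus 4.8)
The plan is to argue by contradiction with a local exchange (perturbation) argument on an optimal solution of (\ref{eqn:opt}). Suppose, for some storage type $\rs \in \mS$ and time period $t \in \mT$, an optimal solution has both $P^{\rs,+}_t > 0$ and $P^{\rs,-}_t > 0$. The guiding intuition is that since the one-way efficiency satisfies $0 < \eta^\rs \leq 1$ we have $1/\eta^\rs \geq \eta^\rs$, so in (\ref{eqn:batconst1}) each unit of simultaneous charge and discharge removes $1/\eta^\rs$ from storage while returning only $\eta^\rs$ (wasted energy when $\eta^\rs<1$), all while the operational cost $c^\rs_\rom(P^{\rs,+}_t, P^{\rs,-}_t)$ is paid on both terms. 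Hence reducing both quantities ought to be strictly beneficial, which should contradict optimality.

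First I would examine the naive perturbation: decrease both $P^{\rs,+}_t$ and $P^{\rs,-}_t$ by $m = \min(P^{\rs,+}_t, P^{\rs,-}_t) > 0$. This leaves the net injection $P^{\rs,+}_t - P^{\rs,-}_t$ unchanged, so the load balance (\ref{eqn:lbconst}) still holds, and it strictly lowers $c^\rs_\rom$ because that function is increasing in each argument. The difficulty -- and what I expect to be the main obstacle -- is that by (\ref{eqn:batconst1}) this raises the next stored level by $m(1/\eta^\rs - \eta^\rs) \geq 0$ and propagates the increase forward; if the original trajectory already saturates the bound $\Ss_\tau = \Ss_{\max}$ at some later $\tau$, the perturbed solution violates (\ref{eqn:batconst2}) and is infeasible. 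The crux is therefore feasibility bookkeeping around the capacity constraint, not the cost comparison.

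To finesse this I would instead use an asymmetric perturbation that leaves the entire storage trajectory fixed. Decrease $P^{\rs,-}_t$ by $b$ and $P^{\rs,+}_t$ by $a = (\eta^\rs)^2 b$; by (\ref{eqn:batconst1}) this keeps $\Ss_{t+1}$, and hence every later $\Ss_\tau$, exactly unchanged, so (\ref{eqn:batconst2})--(\ref{eqn:batconst3}) remain satisfied with no cascade. The induced net change in storage output, $b - a = b\,(1 - (\eta^\rs)^2) \geq 0$, is absorbed by decreasing the slack $G_t$ by the same amount: this preserves (\ref{eqn:lbconst}), keeps (\ref{eqn:grd}) valid since $G_t$ only decreases, and does not affect the objective because $G_t$ does not appear in $f$. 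Choosing $b = \min(P^{\rs,-}_t,\, P^{\rs,+}_t/(\eta^\rs)^2)$ drives at least one of $P^{\rs,+}_t, P^{\rs,-}_t$ to zero while keeping both nonnegative, so complementarity is restored at $(\rs,t)$; this construction works uniformly for all $\eta^\rs \in (0,1]$.

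Finally I would verify that this feasible perturbation strictly decreases the objective: only $c^\rs_\rom(P^{\rs,+}_t, P^{\rs,-}_t)$ changes, and since both arguments are strictly reduced from positive values and the cost is (strictly) increasing, $f$ strictly decreases, contradicting optimality. Applying the argument at every violating pair then yields $P^{\rs,+}_t P^{\rs,-}_t = 0$ for all $\rs, t$. In short, the monotonicity of $c^\rs_\rom$ supplies the strict improvement, and the storage-preserving asymmetric perturbation (with the excess routed through the unpenalized slack $G_t$) is precisely what keeps the perturbed point feasible against the capacity bound.
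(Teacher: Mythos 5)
Your proof is correct, and it takes a genuinely different---and in fact more careful---route than the paper. The paper's proof is a pure cost comparison between two decompositions of the same net power $P^\rs_t$: using the identity $(x+y)^2=(x-y)^2+4xy$ it shows the complementary pair has strictly smaller components than any non-complementary pair with the same difference, hence strictly lower operational cost. What the paper does \emph{not} check is that swapping to the complementary pair keeps the whole trajectory feasible: since the dynamics (\ref{eqn:batconst1}) depend on $P^{\rs,+}_t$ and $P^{\rs,-}_t$ separately and not only on their difference, reducing both by $m=\min(P^{\rs,+}_t,P^{\rs,-}_t)$ raises $\Ss_{t+1}$ (and all later levels) by $m(1/\eta^\rs-\eta^\rs)\geq 0$, which can collide with the cap $\Ss_\tau\leq \Ss_{\max}$ downstream. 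You identify exactly this obstacle and neutralize it with the asymmetric perturbation $(a,b)=((\eta^\rs)^2 b,\,b)$, which freezes the storage trajectory and routes the resulting surplus $b(1-(\eta^\rs)^2)\geq 0$ into the unconstrained-below, cost-free slack $G_t$; the choice $b=\min\bigl(P^{\rs,-}_t,\,P^{\rs,+}_t/(\eta^\rs)^2\bigr)$ then restores complementarity while strictly reducing both arguments of $c^\rs_\rom$. In short, the paper buys brevity at the price of an unstated feasibility assumption, while your exchange argument closes that gap; both proofs share the same reliance on the operational cost being \emph{strictly} increasing to turn the component-wise reduction into a strict decrease of the objective (with a merely non-decreasing cost, either argument only shows that \emph{some} optimal solution is complementary).
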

\begin{proof}
See Appendix \ref{proof:thm1}.
\end{proof}

Additional costs and constraints can also be easily included in this formulation. For example, an environmental tax for traditional diesel generators, and government incentives for renewable generations can be included in the corresponding cost functions. The maximum allowed diesel generation capacity specified by certain energy policies can be included in the generator constraints.

{\em Remark 1:} The problem formulation can be slightly modified into a chance-constrained problem. Instead of the deterministic constraint (\ref{eqn:grd}), we can use the following probabilistic constraint:
\begin{equation} \label{eqn:grpr}
\Pr(G_t \geq G_\mathrm{th}) \leq \alpha,
\end{equation}
where $\alpha \in [0,1]$ is the maximal energy shortage probability allowed. Constraint (\ref{eqn:grpr}) means that local generators and storages have a probability less than or equal to $\alpha$ to be short of energy greater than $G_\mathrm{th}$. In this case, using the results from \cite{sg:calafiore06}, \cite{book:stoopt1}, the probabilistic constraint can be approximated by a set of deterministic constraints, sampled from the probability distribution of the random parameters from the probabilistic constraint. To be more specific, let each scenario be a random realization of load, renewable generation, and initial conditions of the energy storages. The number of required scenarios $J = \mathrm{card}(\mJ)$ is determined by the number of design parameters and the probability measure. Let $N$ denote the number of design parameters. If the number of scenarios $J$ is no less than $\lceil 2N\alpha^{-1}\ln(2\alpha^{-1})+2\alpha^{-1}\ln(\epsilon^{-1})+2N \rceil$, then the solution to the scenario approximation problem has a probability at least $1-\epsilon$ to satisfy the original chance constrained problem. The problem formulation and method of solving the problem are very similar to (\ref{eqn:opt}). We will point out the difference in {\em Remark 2}.

\nomenclature[d]{$G_\mathrm{th}$}{Energy shortage threshold}%
\nomenclature[d]{$\alpha$}{Maximal energy shortage probability allowed}%

\subsection{Formulation of consensus problem}

The renewable generation and user loads in (\ref{eqn:opt}) are all random. In practice, historical data is used in the problem formulation. With a large number of realizations of the random parameters from historic data, the problem becomes increasingly difficult to solve due to the increase of dimensionality. In the rest of this section, we will reformulate the original problem (\ref{eqn:opt}) as a consensus problem, which can be solved in a distributed manner.

We divide the entire planning horizon $\mT$ into sub planning horizons $\mT^j$, which we call scenarios for simplicity. Let $\mJ$ denote the set of all horizons, and we have that $\mT = \cup_{j \in \mJ}\mT^j$. For convenience, we assume $\mT^j$'s are arranged in the order of time. Let $\x_\rd^j = [\bs S^{j}_{\max}; \bs R^j_{\max}; \bs H^j_{\max}]$ denote the design parameters for the $j$th scenario, $\z_\rd = [\bs S_{\max}; \bs R_{\max}; \bs H_{\max}]$ denote the global design parameters, $\mC^j$ denote the feasible set for the $j$th scenario with $\mC = \cap_{j\in\mJ}\mC^j$. In practice, the energy in storages at the beginning of each time period is not random, but rather depends on the energy from the previous time period. Assuming the energy stored at the beginning of a scenario should be equal to the energy stored at the end of the previous scenario, we need additional constraints to ensure this condition is satisfied. Let $\bs S^j_0$ denote the energy storage at the beginning of the $j$th scenario, and $\bs S^j_T$ denote the energy storage at the end of the $j$th scenario. In \cite{sg:yang13storagecv}, we followed the approach in \cite{sg:brown08}, and imposed an additional assumption that the energy in each energy storage at the end of the optimization horizon should be equal to that at the beginning of the optimization horizon, i.e., $\bs S^j_0 = \bs S^{j}_T, j\in\mJ$. However, this assumption makes the solution suboptimal. In this work, we eliminate this assumption and add additional consensus constraints across scenarios.

\nomenclature[e1]{$\mB$}{Group index mapping for boundary conditions}%
\nomenclature[e1]{$B$}{Element-wise index mapping for boundary conditions}%

Let $\x_\rb^j = [\bs S^j_0; \bs S^{j}_T]$ denote the boundary parameters for the $j$th scenario, and $\z_\rd$ denote the global boundary parameters. Let $\mB(j)$ denote the mapping for the indices of the boundary conditions for the $j$th scenario, and thus $\bs z_{\rb,\mB(j)}$ denotes the corresponding boundary parameters for the $j$th scenario. We also use the scalar function $B(j, i)$ to denote element-wise index mapping, i.e., $(\x_\rb^j)_i$ corresponds to $(\z_\rb)_{B(j,i)}$. The constraints $\bs S^j_0 = \bs S^{j-1}_T, j\in\mJ$ can then be written as $\x_\rb^j = \z_{\rb,\mB(j)}, j\in\mJ$.
Using the notations $\bs x^j = [\x^j_\rd; \x^j_\rb]$, $\bs z = [\z_\rd; \z_\rb]$, and $\tbz^j = [\z_\rd; \z_{\rb, \mB(j)}]$, we then formulate the original optimization problem (\ref{eqn:opt}) as follows:
\begin{eqnarray}  \label{eqn:optsa}
\begin{aligned}
& \mathop{\min}_{\x^j \in \mC^j} & & \sum_{j\in\mJ} f^j(\x^j) \\
& \mathrm{subject~to~} & & \x^j = \tbz^j, \quad j\in \mJ. \\
\end{aligned}
\end{eqnarray}
The solution $\bs z$ from solving (\ref{eqn:optsa}) will satisfy that $\z \in\mC$.

{\em Remark 2:} If the probabilistic constraint is considered, and scenario approximation approach is used, the formulation have to be slightly revised. According to \cite{sg:calafiore06}, the random samples for each scenario has to be generated from independent identical distributions. Note that the starting energy stored in the storages also has to be drawn from certain probability distributions. The consensus formulation for the energy storage boundary conditions can then be removed. The number of generated scenarios has to be greater than or equal to the minimum number described in {\em Remark 1}.

\nomenclature[e]{$\mC^j$}{Feasible set for the $j$th scenario}%
\nomenclature[e]{$\x_\rd^j$}{Design parameters for the $j$th scenario}%
\nomenclature[e]{$\z_\rd$}{Global design parameters}%
\nomenclature[e]{$\x_\rb^j$}{Boundary parameters for the $j$th scenario}%
\nomenclature[e]{$\z_\rb$}{Global boundary parameters}%

\nomenclature[e2]{$\rho$}{Dual variable update step-size}%
\nomenclature[e2]{$\tau$, $\mu$}{Parameters for adaptive dual variable step-size}%

\subsection{Distributed Optimization}
The challenge in solving (\ref{eqn:optsa}) is that as the number of scenarios increases, the problem becomes increasingly difficult due to high time complexity. We propose to solve the problem in a distributed manner based on the alternating direction method of multipliers (ADMM) \cite{opt:admm10}, which mitigates the time complexity issue and makes the problem scalable.

To enforce the equality (consensus) constraint in (\ref{eqn:optsa}), an additional quadratic term is added to the original Lagrangian, forming the augmented Lagrangian which can be written as
\begin{equation}
\begin{split}
L_\rho\big(&\{\x^j\}, \z, \{\v^j\}\big)= \\
& \sum_{j \in \mJ} \left( f^j(\x^j) + \v^{j\top} (\x^j - \z) + \frac{\rho}{2} \| \x^j - \z \|_2^2\right),
\end{split}
\end{equation}
where $\{\v^j\}$ denote the dual variables, and $\rho$ is a pre-defined parameter which is the dual variable update step size. The quadratic term penalizes the difference between the local variables $\{\x^j\}$ and corresponding entries of the global variable $\z$, denoted by $\tbz^j$.

The ADMM algorithm iterates among the following steps, with subscript $k$ denoting the iteration number.

\subsubsection{x-minimization step}
For each $j \in \mJ$, the following local minimization problems are solved in parallel:
\begin{equation} \label{eqn:xupdate}
\begin{aligned}
\x_{k+1}^j &= \\
&\mathop{\mathrm{argmin}}_{\x^j \in \mC^j}  f(\x^j) + \v^{j\top} (\x^j - \tbz^j_k) + \frac{\rho}{2} \| \x^j - \tbz^j_k \|_2^2.
\end{aligned}
\end{equation}

\subsubsection{z-minimization step}
\begin{equation} \label{eqn:zupdate}
\begin{aligned}
\z&_{k+1} = \\
&\mathop{\mathrm{argmin}}_{\z_k \in \mC}  \sum_{j \in \mJ} \left(\v^{j\top} (\x^j_{k+1} - \tbz^j_k) + \frac{\rho}{2} \| \x^j_{k+1} - \tbz^j_k \|_2^2\right).
\end{aligned} \raisetag{4\baselineskip}
\end{equation}
To solve for the {\em z-minimization} step, we consider $\z_\rb$ and $\z_\rd$ separately. Decompose $\v^j = [\v^j_\rd; \v^j_\rb]$, and we then rewrite (\ref{eqn:zupdate}) as

\begin{equation} \label{eqn:zupdates}
\begin{aligned}
\z&_{k+1} = \\
&\mathop{\mathrm{argmin}}_{\z}  \sum_{j \in \mJ} \left(\v_\rd^{j\top} (\x^j_{\rd,k+1} - \z_{\rd}) + \frac{\rho}{2} \| \x^j_{\rd,k+1} - \z_{\rd} \|_2^2\right. + \\
&\qquad \left. \v_\rb^{j\top} (\x^j_{\rb,k+1} - \z_{b,\mB(j)}) + \frac{\rho}{2} \| \x^j_{\rb,k+1} - \z_{b,\mB(j)} \|_2^2\right).
\end{aligned} \raisetag{4\baselineskip}
\end{equation}

Solving (\ref{eqn:zupdates}), we obtain that
\begin{equation} \label{eqn:zdupdate}
\z_{\rd,k+1} = \frac{1}{J} \sum_{j\in\mJ} \left(\x^j_{k+1} + \frac{1}{\rho} \v^j_k\right),
\end{equation}
\begin{equation} \label{eqn:zbupdate}
(\z_{\rb,k+1})_g = \frac{\sum_{B(j,i) = g}\left((\x^j_{\rb,k+1})_i + (1/\rho)(\v^j_{\rb})_i\right)}{\sum_{B(j,i) = g}1}.
\end{equation}

When the algorithm converges, the resulting $\bs z^j$ has to satisfy the constraints of each sub-problem, i.e., $\tbz^j \in \mC^j$. Therefore we have that $\z \in \mC \ = \cap_{j\in\mJ}C^j$.


\subsubsection{Dual-variable update}
For each $j \in \mJ$, the dual variables are updated in parallel:
\begin{equation} \label{eqn:vupdate}
\v_{k+1}^j = \v_k^j + \rho \left(\x^j_{k+1} - \tbz_{k+1}\right).
\end{equation}

Since (\ref{eqn:xupdate}) and (\ref{eqn:vupdate}) can be parallelized, the problem is scalable as the number of scenarios increases. The convergence of this approach is guaranteed, as proved in \cite{opt:admm10}. For faster convergence, we use an adaptive dual update stepsize $\rho$. The primal residual $\r^\rp_k$ and dual residual $\r^\rd_k$ are defined as
\begin{equation}
\r^\rp_{k+1} = \frac{1}{J}\sum_{j \in\mJ} \left(\x^j - \tbz^j\right),
\end{equation}
\begin{equation}
\r^\rd_{k+1} = \rho(\z_{k+1}-\z_k).
\end{equation}
As larger $\rho$ penalizes more on the primal residual, and smaller $\rho$ penalizes on the dual residual, the parameter $\rho$ is updated following the rule below:
\begin{equation}
\rho_{k+1} = \begin{dcases*}
\tau \rho_k & if $\|\r^\rp_k\| > \mu \|\r^\rd_k\|$, \\
\rho_k/\tau & if $\|\r^\rd_k\| > \mu \|\r^\rp_k\|$, \\
\rho_k & otherwise,
\end{dcases*}
\end{equation}
where $\tau>1$, $\mu>1$. The algorithm converges when both the primal and dual residual are less than a certain threshold.

\section{Numerical Examples} \label{sec:exp}

In this section, we provide a series of numerical examples using real data from online databases, to showcase how the proposed framework can help in making decisions on renewable generation and energy storage planning.

\begin{table*}
\begin{center}
\caption {Parameters for energy storage and generators.}
\begin{tabular} {|c| l | c  c  c | c  c  c | } \hline
\multirow{2}{*}{Category} & \multirow{2}{*}{Type} & Round-trip & Full charge time & Energy loss & Investment Cost & Life span &\multirow{2}{*}{O/M cost} \\
  &                      & Efficiency  & (hours)         & Ratio$^{\dagger}$         & (M\$/MWh)       & (years) &     \\ \hline
\multirow{3}{*}{Energy storage}&
  $S^1$: Flywheel         & 0.92        & 0.25            & 0.05          & 7.800           & 20      & linear          \\
& $S^2$: Li-ion battery   & 0.88        & 4.00            & 0.01          & 1.700           & 15      & linear          \\
& $S^3$: Pumped storage   & 0.80        & 10.00           & 0.00          & 0.450           & 50      & linear          \\ \hline
\multirow{2}{*}{Renewable generator}&
  $R^1$: Solar panel      & --          & --              & --            & 5.284           & 30      & linear          \\
& $R^2$: Wind turbine     & --          & --              & --            & 2.414           & 20      & linear          \\ \hline
Traditional generator &
  $H^1$: Diesel generator & --          & --              & --            & 0.400           & 5       & quadratic       \\ \hline
\end{tabular} \label{t:srpara}
\\
\begin{flushleft}\footnotesize{$^\dagger$ This parameter was not provided in \cite{sg:storageopt10}.} \end{flushleft}
\end{center}
\end{table*}

\subsection{Data and parameters} \label{subsec:para}

\subsubsection{Renewable generation data}

We consider two types of renewable generation, wind and solar, and simulate renewable generation data from National Solar Radiation Data Base (NSRDB) from NREL \cite{sg:solardata}, \cite{sg:solarradman}. The database provides hourly solar radiation data as well as wind speed data.
The solar generation is calculated using the hourly ``modeled direct normal" radiation measurements and the hourly mean zenith angle. Denote the normal radiation measurement as $R^\rs_{\mathrm{n}}$, the longitude of the target location as $\theta_{\mathrm{l}}$, and the solar panel tilt angle as $\theta_{\mathrm{t}}$. The power received on panel $R^\rs_{\mathrm{p}}$ is then calculated by
\begin{equation}
R_\rs = \min \left( \mu^\rs R^\rs_{\mathrm{n}}\cos(\theta_{\mathrm{t}} - \theta_{\mathrm{l}}), R^\rs_\rr \right),
\end{equation}
where $\mu_\rs$ is the panel efficiency and $R^\rs_\rr$ is the rated power output. The optimal tilt angle for solar panel is determined according to \cite{sg:solartilt}.

The wind power output is calculated using the following equation:
\begin{equation}
R_\rw = \begin{dcases*}
\frac{1}{2} \mu_\rr\rho_{\mathrm{air}} V^3 \frac{\pi d^2}{4} & if $V_{\mathrm{in}} \leq V \leq V_{\mathrm{rated}}$, \\
\frac{1}{2} \mu_\rr\rho_{\mathrm{air}} V_{\mathrm{rated}}^3 \frac{\pi d^2}{4} & if $V_{\mathrm{rated}} \leq V \leq V_{\mathrm{out}}$, \\
0 & otherwise,
\end{dcases*}
\end{equation}
where $\rho_{\mathrm{air}}$ is the density of air, $d$ is the diameter of the wind turbine, and $\mu_\rr$ is the wind turbine efficiency. The generation output is zero when the wind speed is lower than the cut-in speed $V_\mathrm{in}$ or higher than the cut-out speed $V_\mathrm{out}$.

Both the solar and wind generations are normalized by the rated power outputs.
The costs for renewable generators are obtained from \cite{sg:renewableAsses07} and included in Table \ref{t:srpara}.

\subsubsection{Load data}

We use the ERCOT hourly load data from \cite{sg:ercotload}. The data is normalized by the average hourly demand.

\subsubsection{Energy storage parameters}

We select three types of energy storage as prototypes for our simulations, including flywheel storage, Li-ion battery, and pumped storage. The corresponding parameters are determined based on \cite{sg:storageopt10} and included in Table \ref{t:srpara}.

\subsubsection{Definitions}
To quantify $G_\mathrm{th}$ and the maximum capacity of diesel generator $H^{\rcap}_{\max}$, we define two quantities: the shortfall-to-demand ratio, $r_\mathrm{SD}$, and the diesel generation capacity ratio, $r_\mathrm{DC}$. We define the threshold $G_\mathrm{th}$ at time $t$ as
$
G_{\mathrm{th}} = r_\mathrm{SD}  D_t,
$
and therefore the shortfall-to-demand ratio is the ratio between the threshold $G_\mathrm{th}$ and the current demand. The  maximum diesel generator capacity $H^\rcap_{\max}$ is determined by
$
H^\rcap_{\max} = r_\mathrm{DC} \max(D_t),
$
and therefore the diesel generation capacity ratio is the ratio between the maximum diesel generator capacity and the peak demand.

\nomenclature[f]{$r_\mathrm{DC}$}{Maximum diesel generation capacity ratio}%
\nomenclature[f]{$r_\mathrm{SD}$}{Shortfall-to-demand ratio}%

\subsection{Result of storage and generation planning} \label{sec:resultsan}

In this subsection we perform a case study using the setup described in Sec.\ \ref{subsec:para}. The solar panel tilt angles are adjusted twice a year according to \cite{sg:solartilt}. Solar panel efficiency is set to be $20\%$, with rated power output to be $150$W/m$^2$. Wind turbine cut-in and cut-out wind speeds are set to be $3 \mathrm{m}/\rs$ and $20 \mathrm{m}/\rs$, with rated power output achieved at the wind speed of $10 \mathrm{m}/\rs$. The wind turbine efficiency is set to be $50\%$. In the following simulations, we consider the average hourly load to be unit megawatt (1MW) for illustrative purposes, while a micro-grid is usually on the scale of 5-10MW. We use the ERCOT hourly load data from years 2008-2010 to generate the load data, and the NSRDB data to generate the renewable generation data. The maximum hourly load is $1.8050$ MWh, and the minimum hourly load is $0.5557$ MWh. The average hourly generations from per unit MW of wind turbines and solar panels are $0.1217$ MWh and $0.2366$ MWh, respectively. The normalized data (in heatmap) and corresponding box plots are shown in Fig.\ \ref{fig:data}.

\begin{figure}
  \centering
   \subfloat{\includegraphics[width=0.48\textwidth]{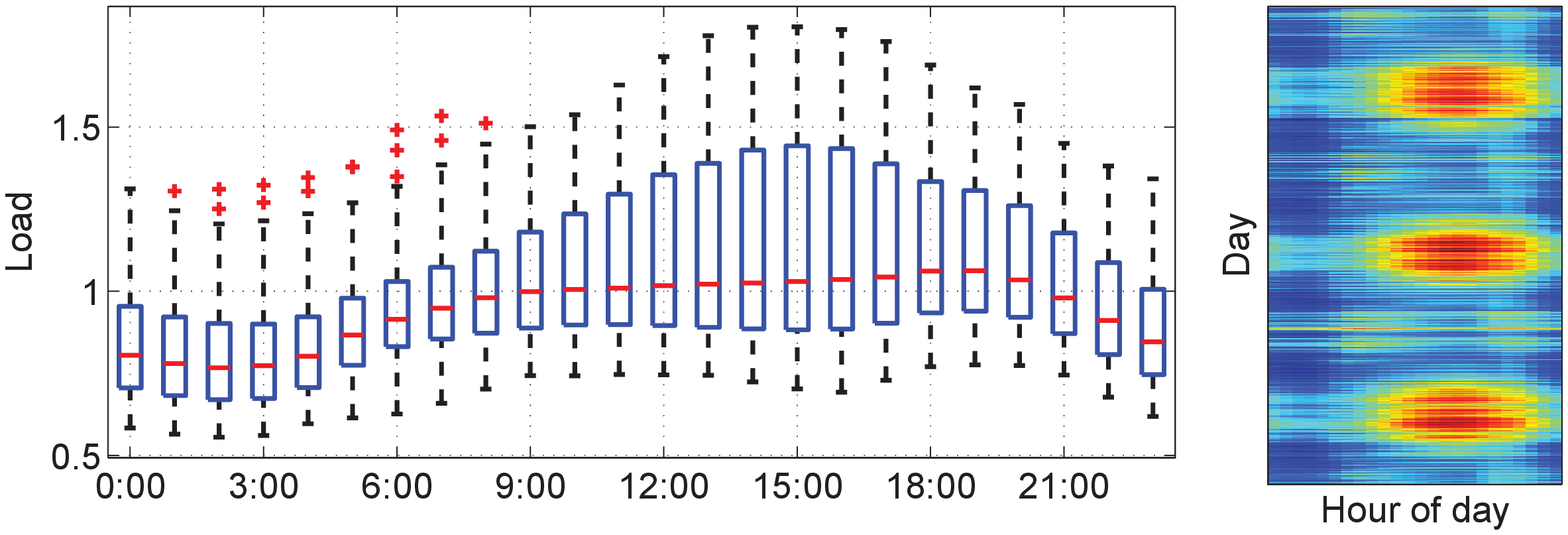}} \\
   \subfloat{\includegraphics[width=0.48\textwidth]{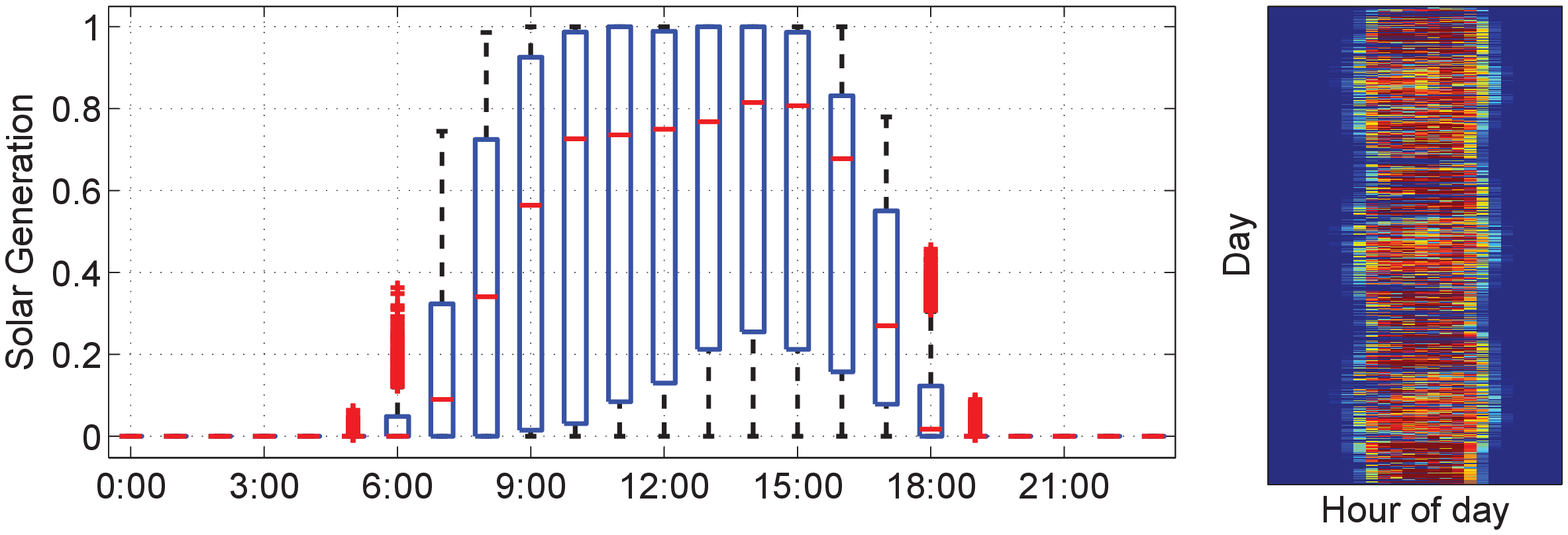}} \\
   \subfloat{\includegraphics[width=0.48\textwidth]{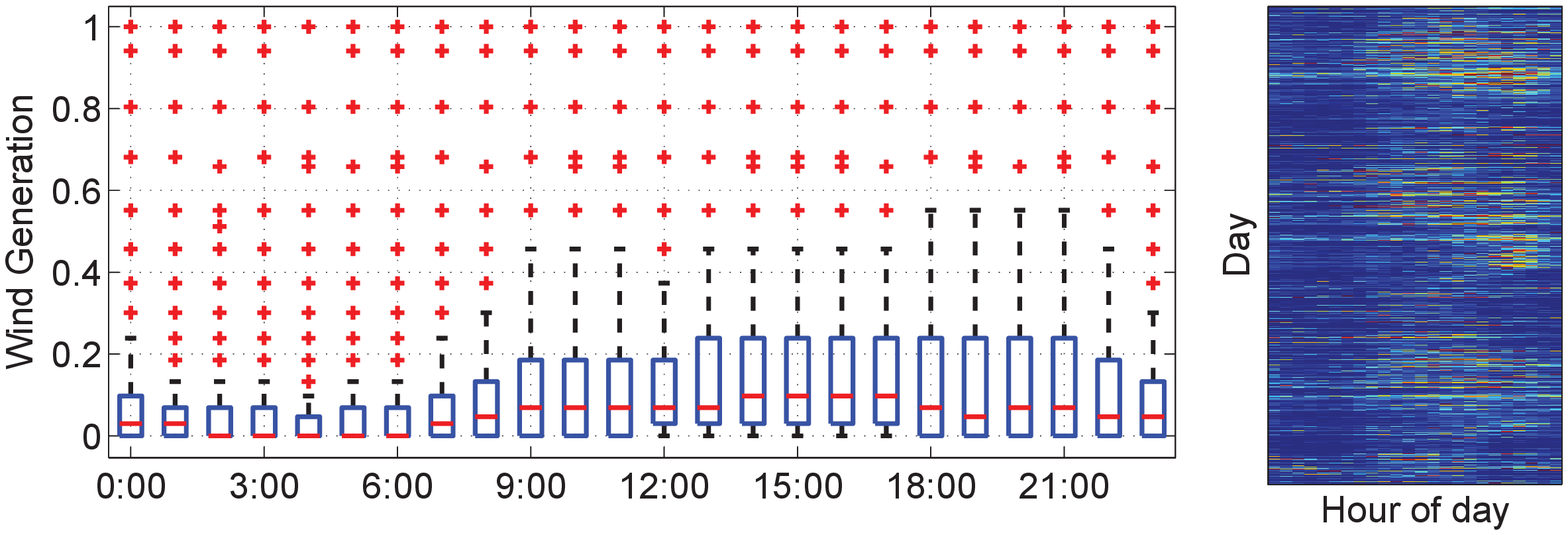}}
   \caption{Plots for normalized load, solar generation, and wind generation data. The plots on the left show box plots, and the plots on the right show raw data.}
    \label{fig:data}
\end{figure}

We set $r_\mathrm{DC} = 0.50$ and $r_\mathrm{SD} = 0.05$, meaning that the maximum allowed diesel generation capacity is half of the peak load, and the maximum energy shortfall to demand from local generators is $5\%$. The CVX toolbox \cite{cvx_n} is used to solve for the x-updates. The optimization results are available in Table \ref{t:planningresult}. All costs are for a thee-year design horizon. The investment costs are amortized for three years.

We notice that although the planned diesel generation capacity is high (equivalent to the upper limit, which is half of the peak load), the actual average generation from these generators are relatively low. Overall, only $18.22\%$ of the consumed energy is from diesel generators, i.e., renewable energy constitutes $81.78\%$ of the total consumed energy\footnote{We assume the renewable generators are generating as much energy as they can, and thus there is excess generation on certain occasions. Only the actual energy consumed by the end users is counted here.}.

\begin{table}
\begin{center}
\caption {Results for $r_\mathrm{DC} = 0.50$, $r_\mathrm{SD} = 0.05$ over a three-year design horizon.}
\begin{tabular} {| c | c  c   c  c |} \hline
\multirow{2}{*}{Type}    & Planned capacity      & Investment cost       & O./M. cost     & Total cost      \\
                         & (MWh)                 &  (M\$)                & (M\$)          & (M\$)           \\ \hline
$S^1$   & 0.5855                & 0.6851              & 0.0019               & 0.6870               \\
$S^2$   & 3.2595                & 1.1082              & 0.0274               & 1.1356               \\
$S^3$   & 4.3051                & 0.1162              & 0.0390               & 0.1553               \\ \hline
$R^1$   & 2.4985                & 1.3202              & 0.1554               & 1.4755               \\
$R^2$   & 5.5192                & 1.9985              & 0.0883               & 2.0868               \\ \hline
$H^1$   & 0.9025                & 0.2166              & 0.5336               & 0.7502               \\ \hline
Total   & --                    & 5.4448              & 0.8456               & 6.2904               \\ \hline
\end{tabular} \label{t:planningresult}
\end{center}
\end{table}

\begin{figure}
  \centering
   \subfloat{\includegraphics[width=0.48\textwidth]{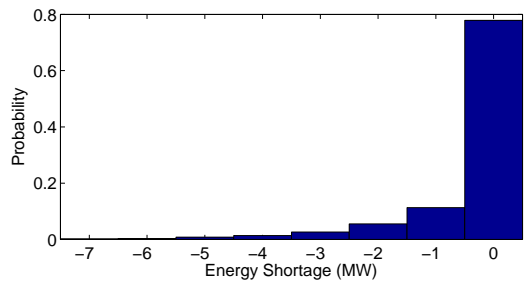}}
   \caption{Hourly energy shortage distribution. Negative value denotes that generation is greater than demand.}
    \label{fig:esd}
\end{figure}

The hourly energy shortage distribution is plotted in Fig.\ \ref{fig:esd}. For most of the time, the energy shortage $G_t$ is close to zero, meaning that the total generation (including storage discharge) is close to the total demand (including storage charge). There are also occasions when there is excess generation, which is either dumped in isolated grids or injected to the main grid in grid-connected micro-grids.

\subsection{Result with different diesel generation capacities}

The maximum allowed diesel generation capacity affects the planning for renewable generation and energy storages. Intuitively, the lower the allowed diesel generation capacity, the more renewable generators and energy storages are required to ensure an uninterrupted energy supply. In this section, we consider different diesel capacity ratios $r_\mathrm{DC}$ while keeping other parameters fixed. The diesel capacity ratio $r_\mathrm{DC}$ is changed from $0$ to $1$, with increments of $0.1$. The resulting total cost and average renewable generation percentage over the three planning years are illustrated in Fig.\ \ref{fig:dcr}.

\begin{figure}
  \centering
   \subfloat{\includegraphics[width=0.48\textwidth]{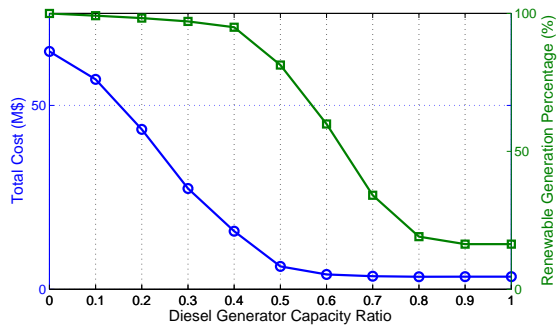}}
   \caption{Total cost and percentage of renewable generation as a function of diesel generation capacity ratio.}
    \label{fig:dcr}
\end{figure}

It can be observed that when $r_\mathrm{DC}$ is smaller then $0.5$ and decreases, the total cost increases rapidly, whereas the percentage of renewable energy do not increase much. This potentially provides guidelines for determining the capacity of diesel generators in an micro-grid, when balancing financial cost and environmental cost.

\begin{table}
\begin{center}
\caption {Geographic locations and climates.}
\begin{tabular} {| c | c | c |} \hline
City                    & GPS Coordinates       & Climate type  \\ \hline
San Antonio             & 29$^\circ$25'N 98$^\circ$30'W       & humid subtropical / hot semi-arid  \\ \hline
St. Louis               & 38$^\circ$37'N 90$^\circ$11'W       & humid continental / subtropical   \\ \hline
San Francisco           & 37$^\circ$47'N 122$^\circ$25'W       & cool-summer Mediterranean \\ \hline
\end{tabular} \label{t:location}
\end{center}
\end{table}

\begin{table} [!h]
\begin{center}
\caption {Comparison of optimization results for different geographic locations.}
\begin{tabular} {| c | c  c   c |} \hline
                    & San Antonio  & St. Louis     & San Francisco      \\ \hline
$S^1$ (MWh)         & 0.5855       & 0.1078        & 0.0013       \\
$S^2$ (MWh)         & 3.2595       & 3.1140        & 3.8601       \\
$S^3$ (MWh)         & 4.3051       & 4.5006        & 5.3880       \\ \hline
$R^1$ (MWh)         & 2.4985       & 2.3587        & 5.2976       \\
$R^2$ (MWh)         & 5.5192       & 4.8909        & 3.2458       \\ \hline
$H^1$ (MWh)         & 0.9025       & 0.9020        & 0.9025       \\ \hline
Total cost (M\$)    & 6.2904       & 5.5331        & 6.5856       \\
Renewable (\%)      & 81.78        & 75.32         & 83.06        \\ \hline
\end{tabular} \label{t:geocomp}
\end{center}
\end{table}

\subsection{Results with data from different geographic locations}

The geographic location has a significant effect on the availability of different renewable energy sources. We consider the case of three cities in the United States, described in Table \ref{t:location}, using a similar setup as described in Sec.\ \ref{sec:resultsan}. The hourly data is not available for all the locations, and therefore we use the same hourly load data for all three regions.
It can be observed from Table \ref{t:geocomp} that the availability of different resources significantly affects the planned capacity of different renewable generators. Since San Francisco has more wind resources, only a relatively small capacity of wind generator is needed to satisfy the energy demands. For similar reasons, San Antonio and St. Louis require less solar panel installation. Other characteristics, e.g., the variability of renewable energy sources, also affect the planned capacity for different types of energy storages.

{\em Remark 3: } In this paper we consider relatively isolated grids. To make better use of renewable generation, it would be more efficient to consider an interconnected network of multiple micro-grids, with inter-grid energy transmission. In this way, less renewable generation and energy storage capacity would be needed to satisfy the energy demands.
The optimization problem will more complicated, and new issues, e.g., long-distance power transmission, need to be considered.


\section{Conclusions} \label{sec:conclusion}

We considered the problem of jointly optimizing multiple energy storage, renewable generator, and diesel generator capacities, in the context of an isolated grid, or a micro-grid with a small carbon footprint. The joint optimization exploits the different characteristics of multiple energy storage types, as well as the availability of different sources of renewable energy. To mitigate the large dimensionality of the optimization problem due to the use of large volumes of historical data, we formulated the original optimization problem as a consensus problem, which can be solved in a parallel distributed manner. We provided a series of numerical examples to illustrate how the proposed framework can be used for planning purposes in practice. To be more specific, we considered scenarios with different maximum diesel generation capacities, and also compared the different planning results in different geographic regions. The proposed work will be helpful in designing renewable generation and energy storage systems for future decentralized power grids with large renewable penetration, and help policy makers make decisions related to renewable energy and sustainability.

In our future work we will consider the problem of optimally operating a given hybrid energy storage and generation system, taking into account the stochastic nature of demand and renewable generation. We will also study the case of an interconnected network of multiple micro-grids, each with local energy generation and inter-grid energy transmission, which potentially makes better use of renewable energy.

\appendix
\subsection{Proof of Theorem 1} \label{proof:thm1}
\begin{proof}
Let $\{P^{\rs,+}_t, P^{\rs,-}_t\}$ and $\{\hat P^{\rs,+}_t, \hat P^{\rs,-}_t\}$ denote two pairs of charge/discharge satisfying
\begin{eqnarray}
\begin{aligned}
&P^{\rs,+}_t - P^{\rs,-}_t = P^\rs_t,\quad  P^{\rs,+}_t P^{\rs,-}_t = 0,\\
&\hat P^{\rs,+}_t - \hat P^{\rs,-}_t = P^\rs_t, \quad \hat P^{\rs,+}_t \hat P^{\rs,-}_t > 0.
\end{aligned}
\end{eqnarray}
We then have
\begin{equation}
\begin{aligned}
(P^{\rs,+}_t + P^{\rs,-}_t)^2 &= (P^{\rs,+}_t - P^{\rs,-}_t)^2 + 4P^{\rs,+}_t P^{\rs,-}_t\\
&= (P^{\rs,+}_t - P^{\rs,-}_t)^2
= (\hat P^{\rs,+}_t - \hat P^{\rs,-}_t)^2 \\
&< (\hat P^{\rs,+}_t - \hat P^{\rs,-}_t)^2 + 4\hat P^{\rs,+}_t \hat P^{\rs,-}_t  \\
&= (\hat P^{\rs,+}_t + \hat P^{\rs,-}_t)^2.
\end{aligned}
\end{equation}
Since $P^{\rs,+}_t - P^{\rs,-}_t = \hat P^{\rs,+}_t - \hat P^{\rs,-}_t$, we then have $P^{\rs,+}_t<P^{\rs,+}_t$, and $P^{\rs,-}_t<P^{\rs,-}_t$. Because the operational cost is an increasing function of $\{P^{\rs,+}_t, P^{\rs,-}_t\}$, we obtain that
\begin{equation}
c_\rom(P^{\rs,+}_t , P^{\rs,-}_t) < c_\rom(\hat P^{\rs,+}_t , \hat P^{\rs,-}_t).
\end{equation}
Therefore the optimal pair $\{P^{\rs,+}_t , P^{\rs,-}_t\}$ must satisfy that $P^{\rs,+}_t P^{\rs,-}_t = 0$, i.e., only one of $P^{\rs,+}_t , P^{\rs,-}_t$ can be non-zero.
\end{proof}

\bibliographystyle{IEEEtran}

\end{document}